\numberwithin{equation}{section}
\theoremstyle{plain}
\newtheorem{theorem}[equation]{Theorem}
\newtheorem{lemma}[equation]{Lemma}
\theoremstyle{definition}
\newtheorem{definition}[equation]{Definition}
\title{Multicolor Ramsey numbers for Berge cycles}
\author{Zachary DeStefano\thanks{NYU Department of Computer Science, Courant Institute, \texttt{zd2131@nyu.edu}} \and Hannah Mahon\thanks{Georgia Tech Research Institute, \texttt{hannah.mahon@gtri.gatech.edu}} \and Frank Simutis\thanks{Villanova University Department of Mathematics \& Statistics, \texttt{fsimutis@villanova.edu}.} \and Michael Tait\thanks{Villanova University Department of Mathematics \& Statistics, \texttt{michael.tait@villanova.edu}. Research is partially supported by National Science Foundation grant DMS-2011553.}}
\date{\today}
\begin{document}

\maketitle

\begin{abstract}
    In this paper, for small uniformities, we determine the order of magnitude of the multicolor Ramsey numbers for Berge cycles of length $4$, $5$, $6$, $7$, $10$, or $11$. Our result follows from a more general setup which can be applied to other hypergraph Ramsey problems. Using this, we additionally determine the order of magnitude of the multicolor Ramsey number for Berge-$K_{a,b}$ for certain $a$, $b$, and uniformities.
\end{abstract}

\section{Introduction}

Given a family of hypergraphs $\mathcal{F}$, the $k$-color Ramsey number for $\mathcal{F}$ is the minimum $n$ such that for any edge coloring of the complete $r$-uniform hypergraph on $n$ vertices with $k$ colors, we have that there exists a monochromatic subgraph $F$ for some $F\in \mathcal{F}$. We will denote this quantity by $R_r(\mathcal{F};k)$. The study of graph and hypergraph Ramsey numbers represents a huge body of research, and we refer the reader to the surveys \cite{CFS} and \cite{MS}. 

In this paper we will be interested in hypergraph Ramsey numbers where the number of colors goes to infinity. We will focus on families of hypergraphs which are Berge-$G$ for some graph $G$, defined as follows. Given a ($2$-uniform) graph $G$, we say that a hypergraph $H$ is a {\em Berge-$G$} if $V(G) \subset V(H)$ and there is a bijection $\phi:E(G) \to E(H)$ such that $e\subset \phi(e)$ for all $e \in E(G)$. In other words, $H$ is a Berge-$G$ if we can embed a single edge into each hyperedge of $H$ and create a copy of $G$. When $G$ is a path or cycle, this definition agrees with the definition of a Berge path or Berge cycle. Note that many nonisomorphic hypergraphs may be a Berge-$G$, and we denote the family of all such hypergraphs by $\mathcal{B}(G)$. The notion of the family of Berge-$G$ for general graphs $G$ was initiated in \cite{GP} and since then extensive research has been done on extremal problems related to $\mathcal{B}(G)$ for various graphs $G$.

The Tur\'an number for a family $\mathcal{F}$ is denoted by $\mathrm{ex}_r(n, \mathcal{F})$ and is the maximum number of edges in an $n$-vertex $r$-uniform hypergraph that does not contain any $F\in \mathcal{F}$ as a subgraph. Early work on extremal problems for Berge hypergraphs focused on Tur\'an numbers of $\mathcal{B}(G)$. Since the introduction of the Berge-Tur\'an problem, a long list of papers have been written about it, far too many to cite here, and we recommend \cite{GMP} for a partial history. More recently Ramsey problems have also been considered, see for example \cite{AG, BGKW, BZ, G, GMOV, GLSS, GS, LW, NV, P, STWZ}. The two problems are related as any coloring avoiding monochromatic $\mathcal{B}(G)$ must have that every color class contains at most $\mathrm{ex}_r(n, \mathcal{B}(G))$ edges.  It is therefore not surprising that the order of magnitude for $R_2(C_{2m};k)$, the multicolor Ramsey number of an even cycle, is known only when $k\in \{2,3,5\}$. In these cases, Li and Lih \cite{LL} showed that $R_2(C_{2m};k) = \Theta\left( k^{\frac{m}{m-1}}\right)$. Our main result is a generalization of this to hypergraphs. We prove our main result as a corollary of some more general theorems which may be useful for future hypergraph Ramsey problems.

Lower bounds for $R_r(\mathcal{F}; k)$ may be proved by considering the dual problem of minimizing the number of colors necessary to partition the edge set of $K_n^{(r)}$ such that each color class is $\mathcal{F}$-free. Our first theorem reduces this dual problem to covering the edges of complete $r$-partite $r$-uniform hypergraphs. We use $K_n^{(r)}$ to denote the complete $r$-uniform hypergraph on $n$ vertices and $K_{n,\cdots, n}^{(r)}$ to denote the complete $r$-partite $r$-uniform hypergraph with $n$ vertices in each part.

\begin{theorem} \label{thm: uniform to complete}
Let $r$ be fixed and $\beta>r-2$ and let $\mathcal{F}$ be a family of connected hypergraphs. If there exists an edge coloring of $K_{n,\cdots, n}^{(r)}$ with $O(n^\beta)$ colors with no monochromatic $F \in \mathcal{F}$, then there exists a coloring of the edges of $K_n^{(r)}$ with $O(n^\beta)$ colors with no monochromatic $F \in \mathcal{F}$.
\end{theorem}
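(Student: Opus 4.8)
The plan is to induct on the uniformity $r$, combining a partition of the vertex set with a recursion on $n$. First reduce to the case that $\mathcal{F}$ is a finite family of connected \emph{$r$-uniform} hypergraphs: a monochromatic $F$ inside a coloring of $K_n^{(r)}$ is necessarily $r$-uniform, so only the $r$-uniform members of $\mathcal{F}$ matter, and for the applications (and without real loss in general) this list is finite. Write $g(n)$ for the minimum number of colors in a coloring of $K_n^{(r)}$ with no monochromatic $F\in\mathcal F$; the goal is to build such a coloring with $g(n)=O(n^\beta)$ out of a given coloring of $K_{n,\dots,n}^{(r)}$ with $O(n^\beta)$ colors.

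Partition $V(K_n^{(r)})$ into $r$ parts $V_1,\dots,V_r$ of size $n/r$, and color an edge $e$ according to its profile $(|e\cap V_1|,\dots,|e\cap V_r|)$. There are three cases. (i) If $e$ is transversal (one vertex per part), color it using the given coloring of $K_{n/r,\dots,n/r}^{(r)}\subseteq K_{n,\dots,n}^{(r)}$ with a dedicated palette of $O((n/r)^\beta)=O(n^\beta)$ colors; since each vertex lies in a fixed part independently of which edge we look at, a monochromatic connected $F$ among transversal edges would be a monochromatic $F$ in the given multipartite coloring, which is impossible. (ii) If $e$ lies entirely inside one $V_i$, color it by recursing inside $V_i$, \emph{using the same recursively built palette for all $i$}; because the $V_i$ are pairwise disjoint and every $F\in\mathcal F$ is connected, a monochromatic $F$ in such a color lies inside a single $V_i$ and is excluded by induction on $n$. (iii) The remaining \emph{mixed} edges — some part of multiplicity $\ge 2$ but at least two nonempty parts — are grouped by profile and handled by a ``delete the vertices common to all edges of a putative monochromatic $F$'' reduction. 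For example, an edge of profile $(2,1,\dots,1,0)$ is a pair inside one part together with one vertex in each of $r-2$ other parts; treating such edges reduces, by a refinement of the partition that separates the various pairwise-overlap patterns into vertex-disjoint sub-instances, to coloring problems of lower uniformity for the \emph{reduced family} $\mathcal F^{(1)}=\{F\setminus W:F\in\mathcal F,\ W\subseteq\bigcap_{e\in E(F)}e,\ |W|=r-2\}$, and the other mixed profiles are treated similarly, dropping the uniformity at each reduction. The lower-uniformity complete multipartite colorings that these subproblems need are obtained for free: restricting the given coloring of $K_{n,\dots,n}^{(r)}$ to parts that have been shrunk to a single ``dummy'' vertex produces a coloring of the smaller-uniformity complete multipartite hypergraph with no monochromatic member of the reduced family (lifting a monochromatic $F\setminus W$ by re-adjoining $W$ would give a monochromatic $F$), with the same $O(n^\beta)$ colors; one may then apply the theorem inductively at the smaller uniformity $r'$, which is legitimate since $\beta>r-2\ge r'-2$.

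Case (i) costs $O(n^\beta)$ and the recursion in (ii) adds a geometric series $\sum_j O((n/r^j)^\beta)$, convergent since $\beta>0$; the full weight of the hypothesis $\beta>r-2$ lies in case (iii). The bottleneck profile is $(2,1,\dots,1,0)$: the recursion there must absorb the $\Theta(n^{r-1})$ edges of this type, and once colors are reused on the vertex-disjoint pieces produced by the partition refinement, the count closes to $O(n^\beta)$ precisely when $\beta>r-2$ (for $\beta\le r-2$ it instead closes to a strictly larger power of $n$). The main obstacle — the heart of the proof — is exactly this step: one must (a) verify that each reduced family is again a finite family of connected hypergraphs that is ``not too sparse'' so that its lower-uniformity colorings stay within budget (for Berge cycles, that the core cycle survives the vertex deletions), (b) arrange the refinement of the partition so that a connected monochromatic $F$ built from mixed edges is always confined to one vertex-disjoint sub-instance, so that palettes can be reused across sub-instances, and (c) do the bookkeeping of all palettes across profiles, uniformities, and recursion levels so the total genuinely stays $O(n^\beta)$ rather than blowing up. Everything else is routine.
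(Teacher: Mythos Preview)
Your overall skeleton --- partition $V(K_n^{(r)})$ into $r$ parts and treat edges by profile --- matches the paper, and cases (i) and (ii) are fine. The gap is case (iii), which is the entire content of the theorem, and your sketched reduction to lower uniformity does not work as stated. The family $\mathcal{F}^{(1)}=\{F\setminus W: W\subseteq\bigcap_{e\in E(F)}e,\ |W|=r-2\}$ is typically empty: a connected $r$-uniform $F$ with more than a couple of edges (for instance any Berge cycle of length exceeding $r$) has no vertex common to all its hyperedges, so there is no admissible $W$. Even when $\mathcal{F}^{(1)}$ is nonempty, a monochromatic $F$ assembled from profile-$(2,1,\dots,1,0)$ edges is under no obligation to use one fixed vertex from each singleton part in every edge; distinct edges of $F$ may hit different vertices of $V_2,\dots,V_{r-1}$, so pinning a dummy vertex per shrunk part and forbidding $\mathcal{F}^{(1)}$ at uniformity $2$ says nothing about such $F$. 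The phrase ``a refinement of the partition that separates the various pairwise-overlap patterns into vertex-disjoint sub-instances'' is exactly the missing ingredient, and you have not supplied it; listing (a)--(c) as obstacles and then calling everything else routine leaves the core of the proof undone.

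The paper's proof never changes uniformity. It inducts instead on a partial order $\prec$ on the profile vectors $\pmb{\rho}\in P_r$, with base case $(1,\dots,1)$ given by hypothesis. For $\pmb{\rho}\neq(1,\dots,1)$ (so $\pmb{\hat{\rho}}\le r-1$), it subdivides each nonempty $V_i$ into $\pmb{\rho}_{\max}$ equal subparts. An edge whose intersection with some $V_i$ straddles two subparts (Type~II) then carries a profile strictly below $\pmb{\rho}$ in $\prec$ and is handled by the poset induction, at cost $Cn^\beta$. Edges lying in a single subpart of each $V_i$ (Type~I) form, for each choice $\mathbf{j}\in\{1,\dots,\pmb{\rho}_{\max}\}^{\pmb{\hat{\rho}}}$ of subparts, a copy of $H_{\pmb{\rho}}^{(r)}(n/\pmb{\rho}_{\max})$, handled by induction on $n$ at cost $C_1(n/\pmb{\rho}_{\max})^\beta$ each. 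The place where $\beta>r-2$ is used is this: grouping the $\mathbf{j}$'s by a cyclic shift of the first coordinate produces $\pmb{\rho}_{\max}^{\pmb{\hat{\rho}}-1}$ bundles of $\pmb{\rho}_{\max}$ pairwise \emph{vertex}-disjoint copies, so one palette serves an entire bundle (here connectedness of $\mathcal{F}$ is used). The recursion then reads $Cn^\beta+\pmb{\rho}_{\max}^{\pmb{\hat{\rho}}-1}\cdot C_1(n/\pmb{\rho}_{\max})^\beta\le C_1 n^\beta$, which closes for a suitable constant $C_1$ precisely because $\pmb{\hat{\rho}}-1\le r-2<\beta$.
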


We use this theorem to prove our main result, which determines the order of magnitude of the multicolor Ramsey number for Berge cycles of certain lengths and certain uniformities.

\begin{theorem}\label{thm: cycles ramsey number}
For $m\in \{2,3,5\}$, if $r<4m-1$, then $R_r(\mathcal{B}(C_{2m});k)$ and $R_r(\mathcal{B}(C_{2m+1});k)$ are each $\Theta \left(k^{\frac{m}{rm-m-1}}\right)$
\end{theorem}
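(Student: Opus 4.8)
The plan is to establish the matching upper and lower bounds for $R_r(\mathcal{B}(C_{2m});k)$ and $R_r(\mathcal{B}(C_{2m+1});k)$ separately, treating the two cycle lengths in parallel; write $\ell\in\{2m,2m+1\}$. For the upper bound I would argue by pigeonhole against the Tur\'an number: if $E(K_n^{(r)})$ is $k$-colored with no monochromatic member of $\mathcal{B}(C_\ell)$, then every color class is a $\mathcal{B}(C_\ell)$-free $r$-uniform hypergraph, hence has at most $\mathrm{ex}_r(n,\mathcal{B}(C_\ell))$ edges, so $k\cdot\mathrm{ex}_r(n,\mathcal{B}(C_\ell))\ge\binom{n}{r}$. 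The input to feed in here is that $\mathrm{ex}_r(n,\mathcal{B}(C_{2m}))=O(n^{1+1/m})$ for all $r<4m-1$, together with the analogous bound for $C_{2m+1}$ when $3\le r<4m-1$ (the Gy\H{o}ri--Lemons-type estimates for Berge cycles and their extensions to higher uniformity, which I would either cite or derive within the paper's framework). Since $\binom{n}{r}=\Theta(n^r)$ this forces $k=\Omega\bigl(n^{\,r-1-1/m}\bigr)$, i.e.\ $n=O\!\bigl(k^{m/(rm-m-1)}\bigr)$, which is the desired bound. The single case this omits is $r=2$ with the odd cycle, where bipartite graphs give $\mathrm{ex}_2(n,C_{2m+1})=\Theta(n^2)$ and pigeonhole is vacuous; for that case I would instead invoke the known value $R_2(C_{2m+1};k)=\Theta(k^{m/(m-1)})$ of the multicolor Ramsey number of an odd cycle, which is available exactly for $m\in\{2,3,5\}$.

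For the lower bound I must exhibit, for $n$ as large as $\Omega\!\bigl(k^{m/(rm-m-1)}\bigr)$, a $k$-coloring of $K_n^{(r)}$ with no monochromatic Berge-$C_{2m}$ or Berge-$C_{2m+1}$; equivalently, a coloring of $E(K_n^{(r)})$ with $O(n^{\,r-1-1/m})$ colors avoiding these. By Theorem~\ref{thm: uniform to complete}, applied with $\beta=r-1-\tfrac1m$ (which exceeds $r-2$ since $m\ge 2$) and $\mathcal F=\mathcal B(C_{2m})\cup\mathcal B(C_{2m+1})$, a family of connected hypergraphs, it suffices instead to color $E(K_{n,\dots,n}^{(r)})$ with $O(n^{\,r-1-1/m})$ colors and no monochromatic Berge-$C_{2m}$ or Berge-$C_{2m+1}$. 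The building block is a coloring of $K_{n,n}$ with $T=O(n^{1-1/m})$ colors in which every color class has girth at least $2m+2$, hence is simultaneously $C_{2m}$-free and $C_{2m+1}$-free; such colorings exist precisely for $m\in\{2,3,5\}$ by the constructions underlying the Li--Lih lower bound \cite{LL}, which come from incidence graphs of projective planes, generalized quadrangles, and generalized hexagons. I would then lift this $2$-uniform coloring to a coloring of $K_{n,\dots,n}^{(r)}$ at the cost of only an extra factor $O(n^{r-2})$ --- for a total of $O(T\cdot n^{r-2})=O(n^{\,r-1-1/m})$ colors --- arranged so that any monochromatic Berge-$C_\ell$ with $\ell\le 2m+1$ would project down to a cycle of length at most $2m+1$ living in a single color class of the underlying $K_{n,n}$-coloring, contradicting the girth bound. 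As a byproduct this construction also supplies the matching Tur\'an lower bound $\mathrm{ex}_r(n,\mathcal B(C_\ell))=\Omega(n^{1+1/m})$.

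The hard part is designing and verifying this lift. The obvious attempt --- coloring $\{v_1,\dots,v_r\}$, with $v_i$ in the $i$th part, by the $K_{n,n}$-color of $\{v_1,v_2\}$ paired with the tuple $(v_3,\dots,v_r)$ --- uses the right number of colors but fails: within a color class the last $r-2$ coordinates are pinned to fixed vertices, and those then act as universal hub vertices, so a short monochromatic path in $K_{n,n}$ closed up through one or two hubs already produces a Berge-$C_{2m}$ or Berge-$C_{2m+1}$; the same flaw defeats the naive ``add one new vertex to every edge'' induction on $r$. A correct lift must constrain the auxiliary coordinates far more rigidly (for instance by recording girth-controlled relations among several pairs of parts rather than bare vertex identities), and the hypothesis $r<4m-1$ is what makes the accounting close: it caps the number of auxiliary ``hub'' coordinates at $r-2\le 4m-3$ relative to the lengths $2m,2m+1$ of the forbidden cycles, so that in every way a Berge cycle could distribute its vertices between the two active parts and the $r-2$ auxiliary parts, one is driven back to a short cycle in a girth-$(2m+2)$ graph. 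Carrying out that case analysis --- over all splittings of the cycle's vertices and hyperedges among the parts --- is where the bulk of the work sits.
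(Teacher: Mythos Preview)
Your overall architecture matches the paper's: pigeonhole against the Berge-cycle Tur\'an bounds from \cite{GL} for the upper bound, and for the lower bound a reduction via Theorem~\ref{thm: uniform to complete} to coloring $K_{n,\dots,n}^{(r)}$, starting from a girth-$(2m+2)$ coloring of $K_{n,n}$ into $T=O(n^{1-1/m})$ classes. The gap is precisely where you place it, in the lift, but the paper's solution is much cleaner than the case analysis you anticipate, and your reading of the role of $r<4m-1$ is off.

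The missing device is \emph{enlarging}. Write $r=s+t$ with $s,t<2m$; this is possible exactly when $r\le 4m-2$, and that is the only place the hypothesis $r<4m-1$ is used. Given a bipartite $G$ of girth at least $2m+2$ on parts $A,B$, replace each $a\in A$ by clones $a_1,\dots,a_s$ and each $b\in B$ by clones $b_1,\dots,b_t$, and turn every edge $ab$ into the single hyperedge $\{a_1,\dots,a_s,b_1,\dots,b_t\}$. A Berge-$C_g$ in this hypergraph, $g\in\{2m,2m+1\}$, projects under ``clone $\mapsto$ original'' to a closed sequence $w_1,\dots,w_g,w_1$ in $G$ whose consecutive terms are equal or adjacent; since the $g$ hyperedges are distinct so are the $g$ underlying edges of $G$, and since $s,t<2m\le g$ the $w_j$ cannot all coincide. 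Contracting repeats yields a nondegenerate closed walk in $G$ with distinct edges, hence a cycle of length at most $2m+1$, contradicting the girth. That is Lemma~\ref{lem: cycle blowup}, a few lines rather than a case split over distributions of cycle vertices among parts. The coloring of $K_{n,\dots,n}^{(r)}$ (Theorem~\ref{thm: hypergraph partitioning}) then records, over $\mathbb Z/n\mathbb Z$, the $K_{n,n}$-color of $(v_1,v_{s+1})$ together with the \emph{differences} $(v_2-v_1,\dots,v_s-v_1,\,v_{s+2}-v_{s+1},\dots,v_r-v_{s+1})$ rather than the raw tuple $(v_3,\dots,v_r)$ of your ``obvious attempt''; each color class is isomorphic to an enlargement of some $G_i$, so the lemma applies directly, and the color count is $T\cdot n^{r-2}=O(n^{r-1-1/m})$ as you predicted.
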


We note that our proof yields that if one could determine that the order of magnitude of the graph multicolor Ramsey number for $C_{2m}$ is $\Theta(k^{\frac{m}{m-1}})$ for some $m\not\in \{2,3,5\}$ then this would also determine that for all $r < 4m-1$ the order of magnitude of the $r$-uniform multicolor Ramsey number for $\mathcal{B}(C_{2m})$ and for $\mathcal{B}(C_{2m+1})$ is $\Theta \left(k^{\frac{m}{rm-m-1}}\right)$. Using similar techniques, we are also able to give lower bounds on $R_r(\mathcal{B}(K_{a,b});k)$ for some choices of $r, a, b$. 

\begin{theorem}\label{thm: biclique ramsey number}
Let $b\geq 2$ and $a > (b-1)!$. Then for all $r < 2(a+b)-1$ we have 
\[
R_r(\mathcal{B}(K_{a,b});k) = \Omega\left(k^{\frac{b}{(r-2)b+1}} \right).
\]
Furthermore, when $b=2$ or $a+b \leq r < 2(a+b)-1$, for $a> (b-1)!$ we have 
\[
R_r(\mathcal{B}(K_{a,b});k) = \Theta\left(k^{\frac{b}{(r-2)b+1}} \right).
\]
\end{theorem}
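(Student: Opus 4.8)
The plan is to prove the lower bound for every $r<2(a+b)-1$ and the matching upper bound in the stated range, following the strategy used for Berge cycles. For the lower bound, apply Theorem~\ref{thm: uniform to complete} with $\beta=r-2+\tfrac1b$: since $\beta>r-2$ and every Berge-$K_{a,b}$ is connected, it suffices to produce, for infinitely many $n$, an edge coloring of $K^{(r)}_{n,\dots,n}$ with $O(n^{\beta})$ colors and no monochromatic member of $\mathcal B(K_{a,b})$; then $R_r(\mathcal B(K_{a,b});k)=\Omega(k^{1/\beta})=\Omega\!\big(k^{b/((r-2)b+1)}\big)$. For the $\Theta$ statement one additionally needs $\mathrm{ex}_r(n,\mathcal B(K_{a,b}))=O(n^{2-1/b})$ in the indicated range of $r$, since then a $k$-coloring of $K^{(r)}_n$ with no monochromatic Berge-$K_{a,b}$ has every color class of size $O(n^{2-1/b})$, whence $\binom nr\le k\cdot O(n^{2-1/b})$ and $n=O\!\big(k^{b/((r-2)b+1)}\big)$.

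For the $r$-partite coloring, first fix an edge coloring of $K_{n,n}$ (parts $A,B$, $|A|=|B|=n$) into $M=O(n^{1/b})$ classes $\Gamma_1,\dots,\Gamma_M$, each a \emph{bipartite} graph containing no $K_{a,b}$ in either orientation. This is possible precisely because $a>(b-1)!$: taking $n=q^{b}$ and $A=B=\mathbb F_{q^{b-1}}\times\mathbb F_q$ (viewed as an abelian group), put $((X,x),(Y,y))$ in class $N(X+Y)-x-y\in\mathbb F_q$ for the norm $N\colon\mathbb F_{q^{b-1}}\to\mathbb F_q$; each class is a translate of the bipartite norm graph, hence $K_{b,(b-1)!+1}$-free (and $K_{b,(b-1)!+1}\subseteq K_{b,a}$), and the $M=q=n^{1/b}$ classes partition $A\times B$. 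Now identify every odd-indexed part of $K^{(r)}_{n,\dots,n}$ with $A$ and every even-indexed part with $B$, and color an edge $e=\{v_1,\dots,v_r\}$ by
\[
\big(\,c(v_1,v_2),\ (v_\ell-v_1)_{\ell\text{ odd},\,\ell\ge3},\ (v_\ell-v_2)_{\ell\text{ even},\,\ell\ge4}\,\big),
\]
where $c(v_1,v_2)$ names the class of the pair $(v_1,v_2)$. This is a partition of $E(K^{(r)}_{n,\dots,n})$ into $M\cdot n^{r-2}=O(n^{\beta})$ classes, and each class is a coordinate-shifted copy of the ``$\Gamma_s$-blow-up'': every edge $\{u,v\}$ of $\Gamma_s$ ($u\in A$, $v\in B$) is replaced by the $r$-edge consisting of shifted copies of $u$ in the $\lceil r/2\rceil$ odd parts and of $v$ in the $\lfloor r/2\rfloor$ even parts.

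The heart of the proof is that each such class $H$ is Berge-$K_{a,b}$-free when $r<2(a+b)-1$, i.e.\ when $t:=\lceil r/2\rceil\le a+b-1$. Suppose $H$ contained a Berge-$K_{a,b}$ with core $x_1,\dots,x_a,y_1,\dots,y_b$ and distinct hyperedges $e_{ij}\supseteq\{x_i,y_j\}$, where $e_{ij}$ is the image of an edge $(u_{ij},w_{ij})$ of $\Gamma_s$. Since the odd (resp.\ even) coordinates of $e_{ij}$ depend only on $u_{ij}$ (resp.\ $w_{ij}$), the condition $x_i\in e_{ij}$ fixes, for all $j$, either $u_{ij}$ or $w_{ij}$ to one value $\alpha_i$, and similarly each $y_j$ fixes one coordinate of every hyperedge in column $j$ to a value $\beta_j$. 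One then runs a case analysis on how the $\alpha_i,\beta_j$ are identified as vertices of $\Gamma_s$: if no two core vertices in parts of the same parity are identified and there is no cross identification, then the hyperedges indexed by opposite-parity pairs realize a $K_{a,b}$ inside $\Gamma_s$, which is impossible; any same-parity identification, or any cross identification — the latter impossible ``locally'' since $\Gamma_s$ is bipartite and hence triangle-free — forces two of the $e_{ij}$ to coincide, contradicting distinctness, unless all $a+b$ core vertices are copies of a single vertex of $\Gamma_s$; but a vertex of $\Gamma_s$ has only $t\le a+b-1$ copies among the parts on its side. Pushing this case analysis through, which is exactly where the hypothesis $r<2(a+b)-1$ enters, is what I expect to be the main obstacle.

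For the $\Theta$ statement I would establish $\mathrm{ex}_r(n,\mathcal B(K_{a,b}))=O(n^{2-1/b})$ for $b=2$ or $a+b\le r<2(a+b)-1$: when $b=2$ this is the known Berge-Tur\'an bound for $\mathcal B(K_{a,2})$, and for $a+b\le r$ it follows from a double-counting (and $\Delta$-system) argument based on the fact that any $(a+b)$-set lying in $\ge ab$ edges already produces a Berge-$K_{a,b}$, so that the relevant codegrees must be small. Combined with the counting in the first paragraph, this gives the upper bound matching the construction. Thus the genuinely non-routine points are (i) the identification-pattern case analysis certifying Berge-$K_{a,b}$-freeness of the color classes for all $r<2(a+b)-1$, and (ii) the Tur\'an estimate $O(n^{2-1/b})$ for $a+b\le r<2(a+b)-1$; the norm-graph decomposition of $K_{n,n}$, the invocation of Theorem~\ref{thm: uniform to complete}, and the conversion between $n$ and $k$ are standard.
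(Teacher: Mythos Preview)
Your approach is essentially the paper's: partition $K_{n,n}$ into $O(n^{1/b})$ $K_{a,b}$-free bipartite graphs (norm graphs, as in \cite{ARS}), enlarge to get $r$-uniform color classes via the shift construction (the paper's Theorem~\ref{thm: hypergraph partitioning}), show those classes are Berge-$K_{a,b}$-free, then invoke Theorem~\ref{thm: uniform to complete}; the upper bound is pigeonhole plus known Tur\'an estimates.

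The two places you flag as non-routine are exactly where the paper and your write-up diverge in presentation. For (i), your parity-based case split is more elaborate than necessary: the paper isolates this as a standalone lemma (Lemma~\ref{lem: biclique blowup}) stated purely in terms of the enlarging operation, and the proof there avoids your ``same-parity / cross-identification'' dichotomy by arguing directly that all $x_i$ must land on one side and all $y_j$ on the other (else the bijection $\phi$ fails to be injective, since $a,b\ge2$), after which distinctness of the preimages and hence a $K_{a,b}$ in $G$ follow immediately. The hypothesis $s,t<a+b$ is used only once, to rule out all $a+b$ core vertices collapsing to copies of a single vertex of $G$, exactly as you anticipated. For (ii), the paper does not reprove the Tur\'an bounds but simply cites $\mathrm{ex}_r(n,\mathcal B(K_{2,t}))=O(n^{3/2})$ from \cite{GMP} and $\mathrm{ex}_r(n,\mathcal B(K_{a,b}))=O(n^{2-1/b})$ for $r\ge a+b$ from \cite{GP}; your proposed $\Delta$-system argument would recover the latter, but it is already in the literature.
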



\section{Preliminaries}

\begin{definition}
For a given $r$, there are a finite number of possible vectors $(\rho_1,...,\rho_r)$ such that $\rho_i \in \mathbb{N}\cup \{0\}$ and $\sum \rho_i = r$. We will call the set of these vectors $P_r$. Given a particular vector $\pmb{\rho} \in P_r$, we have the following shorthand for describing specific features of this vector.
The maximal element is $\pmb{\rho}_{max}$.
The number of non-zero $\rho_i$ is $|\text{supp}(\pmb{\rho})|$ which we will notate as $\pmb{\hat{\rho}}$ for brevity.
\end{definition}

\begin{definition}
We define $(P_r,\prec)$ to be the partial ordering of these weak compositions of $r$ as the following. $\forall \pmb{\rho},\pmb{\tau} \in P_r$, $\pmb{\rho} \prec \pmb{\tau}$ if $\pmb{\hat{\rho}}> \pmb{\hat{\tau}}$ and there exists an ordering of a partition of $\pmb{\rho}$ into $\pmb{\hat{\tau}}$ subsets such that the sum of the elements in the $i$'th ordered subset of $\pmb{\rho}$ are equal to the $i$'th nonzero entry of $\pmb{\tau}$ for all $i$. \end{definition}

By considering any linear extension of the poset $(P_r,\prec)$, we arrive at a total ordering of $P_r$ with smallest element $(1,1,...,1)$ which we can then induct over.

\begin{definition}
We define $H^{(r)}_{(\rho_1,...,\rho_r)}(n)$ to be the hypergraph with vertex set $V = V_1 \cup V_2 \cup ... \cup V_r$ where $|V_i|=n$ and edge set $\{e : |e \cap V_i|=\rho_i \}$.
\end{definition}

We will need the following procedure which takes a graph and transforms it to a hypergraph of higher uniformity.

\begin{definition}[Enlarging] 
Let $G$ be a bipartite graph with partite sets $A$ and $B$ and let $a,b\in \mathbb{N}$. Define an $(a+b)$-uniform hypergraph $H$ as follows. For each $v\in A$ let $v_1,\cdots, v_a$ be $a$ disjoint vertices and for each $u\in B$ let $u_1,\cdots, u_b$ be $b$ disjoint vertices. Then 
\[
V(H) = \left(\bigcup_{v\in A} \{v_1,\cdots , v_a\}\right) \cup \left( \bigcup_{u \in B} \{u_1, \cdots, u_b\}\right),
\]
\[
E(H) = \left \{ \{v_1,\cdots, v_a, u_1,\cdots, u_b\}: uv\in E(G) \right\}.
\]

We say that $H$ {\em is the hypergraph obtained by enlarging each vertex in $A$ to $a$ vertices and each vertex in $B$ to $b$ vertices}.
\end{definition}

As stated in the introduction, determining the minimum $n$ such that any coloring of $K_n^{(r)}$ has a monochromatic $F$ is equivalent to the dual problem of minimizing the number of colors necessary to color $K_n^{(r)}$ such that no color class contains an $F$. We formalize this with the following function. 
\begin{definition}
Let $H$ be a hypergraph and $\mathcal{F}$ be a family of hypergraphs. Define the function $C(H, \mathcal{F})$ to be the minimum number of colors necessary to color the edge set of $H$ such that no color class contains any $F\in \mathcal{F}$.
\end{definition}
\section{Proof of Theorem \ref{thm: uniform to complete}}
Let $\beta>r-2$ and let $\mathcal{F}$ be a fixed family of connected hypergraphs, and assume that we can color the edges of complete $r$-uniform $r$-partite hypergraph with $O(n^{\beta})$ colors so that there is no monochromatic copy of a hypergraph in $\mathcal{F}$. That is, there exists a constant $c_{1,\cdots, 1}$ such that $C(H_{1,\cdots, 1}^{(r)}(n), \mathcal{F}) \leq c_{1,\cdots, 1} n^{\beta}$ for all $n$. We aim to show that $C(K_n^{(r)}, \mathcal{F}) = O(n^{\beta})$. To do this, we will split the edge set of $K_n^{(r)}$ into a bounded number of parts each associated to an element of the poset $P_r$ and show that that each of these sets can be colored with $O(n^{\beta})$ colors. 

Since $C(K_n^{(r)}, \mathcal{F})$ is monotone in $n$, we assume without loss of generality that $n$ is divisible by $r$. Divide the vertex set into $V_1,..., V_r$ each of size $\frac{n}{r}$. For each edge $e$ there is a vector $(e_1,...,e_r) \in P_r$ where $e_i = |e\cap V_i|$, and we may partition the edge set of $K^{(r)}_n$ into sets depending on which vector in $P_r$ it is associated with. For a given vector $\pmb{\rho}\in P_r$ the set of edges with vector $\pmb{\rho}$ forms a subhypergraph isomorphic to $H_{\pmb{\rho}}^{(r)}\left(\frac{n}{r}\right)$, and hence

\[K_n^{(r)} = \bigcup\limits_{\pmb{\rho} \in P_r} H_{\pmb{\rho}}^{(r)}\left(\frac{n}{r}\right).\]

Since the number of vectors in $P_r$ is a constant that depends only on $r$, it suffices to show that for each $\pmb{\rho}\in P_r$ we have that $C(H_{\pmb{\rho}}^{(r)}, \mathcal{F}) = O(n^{\beta})$. We will proceed by induction on (any linear extension of) $P_r$. Since by the assumption we have that $C(H_{1,\cdots, 1}^{(r)}(n), \mathcal{F}) \leq c_{1,\cdots, 1} n^{\beta}$, the base case is satisfied. Now fix $\pmb{\rho} = (\rho_1,\cdots, \rho_r)\in P_r$ and assume that for all $\pmb{\tau} \prec \pmb{\rho}$ there is a constant $c_{\pmb{\tau}}$ such that $C(H_{\pmb{\tau}}^{(r)}(n), \mathcal{F}) \leq c_{\pmb{\tau}} n^{\beta}$ for all $n$. 

Note that if $\rho_i = 0$, then $V_i$ is not incident with any hyperedges of $H_{\pmb{\rho}}^{(r)}(n)$. Without loss of generality we can assume that $\rho_1$ through $\rho_{\pmb{\hat{\rho}}}$ are non-zero. Split each $V_i$ where $\rho_i > 0$ into $\pmb{\rho}_{max}$ parts $V_{i, 1},\cdots, V_{i,\pmb{\rho}_{max}}$ (again without loss of generality assume that $n$ is divisible by $\pmb{\rho}_{max}$). Divide the edges of $H_{\pmb{\rho}}^{(r)}\left(n\right)$ as follows.  Call an edge $e$ {\em Type I} if for all $i$ there exists a $j$ such that $e\cap V_{i,j} = e\cap V_i$. Call the other edges {\em Type II}. We will show that we may cover the Type I and Type II edges with $O(n^{\beta})$ $\mathcal{F}$-free hypergraphs by induction on $n$ and by the induction hypothesis on $P_r$ respectively. 

First we take care of the Type II edges. For any choice $U_1,\cdots, U_r$ of distinct sets from $\{V_{i,j}\}_{i,j}$ we may consider the subhypergraph of Type II edges which are induced by $U_1,\cdots, U_r$. If this subhypergraph contains edges, then for each edge $e$ one may consider the vector $(e'_1,\cdots, e'_r)$ where $e'_i = |U_i \cap e|$. By definition of Type II, the vector $(e'_1,\cdots, e'_r)$ is strictly less than $\pmb{\rho}$ in $P_r$. Therefore, by the induction hypothesis (on $P_r$), this subhypergraph of edges may be covered by $O(n^\beta)$ hypergraphs each of which is $\mathcal{F}$-free. Since the number of choices for $U_1,\cdots, U_r$ is a constant that depends only on $r$ and $\pmb{\rho}_{max}$, we have that there is an absolute constant $C:=C_{r, \pmb{\rho}}$ so that the Type II edges may be covered with at most $C n^{\beta}$ $\mathcal{F}$-free subhypergraphs.

Next we take care of the Type I edges by induction on $n$. Define $C_1$ to be a constant that satisfies $C + C_1 \pmb{\rho}_{max}^{\pmb{\hat{\rho}} - 1 - \beta} < C_1$. This is possible since $\beta > r-2$ and $\pmb{\hat{\rho}} \leq r-1$ for any $\pmb{\rho} \not= (1,\cdots, 1)$. For the induction hypothesis, assume that for any $k < n$ we have that $C(H_{\pmb{\rho}}^{(r)}(k), \mathcal{F}) \leq C_1 k^{\beta}$. For any $\mathbf{j} = (j_1,\cdots, j_{\hat{\pmb{\rho}}}) \in \{1,\cdots, {\pmb{\rho}_{max}}\}^{\hat{\pmb{\rho}}}$ the graph of Type I edges induced by $V_{1, j_1},\cdots, V_{{\hat{\pmb{\rho}}}, j_{\hat{\pmb{\rho}}}}$ is isomorphic to $H_{\pmb{\rho}}^{(r)}\left(\frac{n}{\pmb{\rho}_{max}}\right)$. By the induction hypothesis (on $n$) there are $\mathcal{F}$-free hypergraphs $G_1(\mathbf{j}),\cdots, G_{T}(\mathbf{j})$ which cover the Type I edges induced by $V_{1, j_1},\cdots, V_{{\hat{\pmb{\rho}}}, j_{\hat{\pmb{\rho}}}}$ where $T = C_1\left( \frac{n}{ \pmb{\rho}_{max}}\right)^\beta$. Naively, we could use such a set of hypergraphs for each $\mathbf{j}$, but unfortunately this is not a small enough number in total. In order to reduce the total number of hypergraphs used, we will combine those which are edge-disjoint. Note that because $\mathcal{F}$ contains only connected hypergraphs, the disjoint union of $\mathcal{F}$-free graphs is still $\mathcal{F}$-free.

For each $\mathbf{j}$ assume that we have $\mathcal{F}$-free hypergraphs $G_1(\mathbf{j}),\cdots, G_T(\mathbf{j})$ which partition the Type I edges induced by $V_{1, j_1},\cdots, V_{{\hat{\pmb{\rho}}}, j_{\hat{\pmb{\rho}}}}$ and $T = C_1\left( \frac{n}{ \pmb{\rho}_{max}}\right)^\beta$. We combine disjoint copies of these as follows. For $k_2,\cdots, k_{\pmb{\hat{\rho}}}$ any ${\pmb{\hat{\rho}}}-1$ (not necessarily distinct) integers in $\{0,\cdots, \pmb{\rho}_{max}-1\}$, consider the vectors $\mathbf{j}_1 = (1, 1+k_2,\cdots, 1+k_{\pmb{\hat{\rho}}})$, $\mathbf{j}_2 = (2, 2+k_2, \cdots, 2+k_{\pmb{\hat{\rho}}})$, .... $\mathbf{j}_{\pmb{\hat{\rho}}} = ({\pmb{\hat{\rho}}}, {\pmb{\hat{\rho}}}+k_2, \cdots, {\pmb{\hat{\rho}}}+k_{\pmb{\hat{\rho}}})$ where addition is done on $\{1,\cdots, \pmb{\rho}_{max}\}$ mod $\pmb{\rho}_{max}$. Then for any $t$ the graphs $G_t(\mathbf{j}_1), \cdots ,G_t(\mathbf{j}_{\pmb{\hat{\rho}}})$ are disjoint. Let their union be called $G_t(k_2,\cdots, k_{\pmb{\hat{\rho}}})$. Then as $k_2,\cdots, k_{\pmb{\hat{\rho}}}$ vary we have 
\[
\bigcup_{t=1}^T \bigcup_{k_2,\cdots, k_{\pmb{\hat{\rho}}}} G_t(k_2,\cdots, k_{\pmb{\hat{\rho}}}) =  \bigcup_{t=1}^T \bigcup_{\mathbf{j}} G_t(\mathbf{j}), 
\]
and this union covers all of the Type I edges. The total number of graphs $G_t(k_2,\cdots, k_{\pmb{\hat{\rho}}})$ is $\pmb{\rho}_{max}^{\pmb{\hat{\rho}}-1} \cdot T = C_1 \left(\pmb{\rho}_{max}^{\pmb{\hat{\rho}} -1 - \beta }\right)n^\beta$. Combining the graphs used to cover the Type I edges with the graphs used to cover the Type II edges we have that 
\[
C(H_{\pmb{\rho}}^{(r)}(n), \mathcal{F}) \leq Cn^{\beta} + C_1 \left(\pmb{\rho}_{max}^{{\pmb{\hat{\rho}}}-1 - \beta }\right)n^\beta < C_1 n^{\beta},
\]
where the last inequality follows by the choice of $C_1$.

\section{Proof of Theorems \ref{thm: cycles ramsey number} and \ref{thm: biclique ramsey number}}
We need the following lemmas which take a graph and transform it to a hypergraph which forbids something. Lemma \ref{lem: cycle blowup} has been noted before, see Construction 1.9 in \cite{GL2} for example, but we include a proof for completeness.

\begin{lemma}\label{lem: cycle blowup}
Let $G$ be a bipartite graph with no $C_3, C_4,...,C_{2m},C_{2m+1}$. Let $H$ be the $(s+t)$-uniform hypergraph obtained by enlarging each vertex in one part of $G$ to $s$ vertices and each vertex in the other part of $G$ to $t$ vertices. Then if $s < 2m$ and $t < 2m$, $H$ is $\mathcal{B}(C_{2m})$ and $\mathcal{B}(C_{2m+1})$ free.
\end{lemma}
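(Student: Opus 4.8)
The plan is to argue by contradiction, turning a Berge cycle in $H$ into a short cycle in $G$. Write $A$ and $B$ for the two parts of $G$, say with $A$ enlarged to blocks of size $s$ and $B$ to blocks of size $t$, and for a vertex $w$ of $G$ let $\mathrm{bl}(w) \subseteq V(H)$ be its \emph{blob} (the $s$ or $t$ vertices it was enlarged to); distinct vertices of $G$ have disjoint blobs. The enlarging construction gives a bijection $e \mapsto f_e$ from $E(G)$ to $E(H)$ with $f_{ab} = \mathrm{bl}(a) \cup \mathrm{bl}(b)$. The first thing I would record is the key intersection fact: for distinct edges $e, e' \in E(G)$, since the blobs are pairwise disjoint and $G$ is a simple bipartite graph, $f_e \cap f_{e'} = \emptyset$ if $e$ and $e'$ are nonadjacent, and $f_e \cap f_{e'} = \mathrm{bl}(w)$ if $e$ and $e'$ share the (necessarily unique) common vertex $w$.

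Now suppose $H$ contains some $F \in \mathcal{B}(C_\ell)$ with $\ell \in \{2m, 2m+1\}$. Unwinding the definition, there are distinct vertices $x_1, \dots, x_\ell \in V(H)$ and distinct hyperedges $f_1, \dots, f_\ell \in E(H)$ with $\{x_i, x_{i+1}\} \subseteq f_i$ for all $i$ (indices mod $\ell$). Let $e_i \in E(G)$ be the edge with $f_i = f_{e_i}$; the $e_i$ are distinct. Since $x_{i+1} \in f_i \cap f_{i+1} \neq \emptyset$, the fact above shows that $e_i$ and $e_{i+1}$ share a unique vertex $w_{i+1}$ and that $x_{i+1} \in \mathrm{bl}(w_{i+1})$. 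This produces a cyclic sequence $w_1, \dots, w_\ell$ of vertices of $G$ with $x_i \in \mathrm{bl}(w_i)$ for every $i$, and with the property that $w_i$ and $w_{i+1}$ are precisely the two endpoints of $e_i$ whenever they are distinct.

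Next I would collapse maximal runs of equal consecutive entries of $(w_1, \dots, w_\ell)$ (cyclically). If $w_i = \dots = w_{i+p} = w$ is such a run, then $x_i, \dots, x_{i+p}$ are $p+1$ distinct vertices of $\mathrm{bl}(w)$, so $p+1 \le |\mathrm{bl}(w)| \le \max(s,t) < 2m \le \ell$; hence no run is all of $\{1, \dots, \ell\}$, and the collapsed sequence $w^*_1, \dots, w^*_q$ has $q \ge 2$ with consecutive entries distinct. For each $j$, if the run with value $w^*_j$ ends at index $i$, then $w_i = w^*_j \neq w^*_{j+1} = w_{i+1}$, so $e_i$ is exactly the edge of $G$ joining $w^*_j$ to $w^*_{j+1}$; moreover these $q$ indices $i$ are distinct, so these $q$ edges of $G$ are distinct. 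Thus $w^*_1, \dots, w^*_q$ is a closed walk in $G$ of length $q \le \ell$ using $q$ distinct edges, whose edge set is therefore a nonempty subgraph of $G$ in which every vertex has even degree. Such a subgraph is not a forest, hence contains a cycle $C$ with $3 \le |E(C)| \le q \le \ell \le 2m+1$, contradicting that $G$ has no $C_3, C_4, \dots, C_{2m+1}$, and completing the proof.

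The only real obstacle is the degeneracy addressed in the third paragraph: a priori the core vertices $x_1, \dots, x_\ell$ of the Berge cycle could pile up inside a single blob (or a few blobs), making the induced closed walk in $G$ too short to contain a forbidden cycle. This is exactly what the hypotheses $s < 2m$ and $t < 2m$ prevent, by bounding the length of every run and in particular forcing $q \ge 2$; the case $q = 2$ is excluded automatically, since it would force a repeated edge in the simple graph $G$, but this is already subsumed by the "nonempty even subgraph contains a cycle" step. Everything else is bookkeeping about how the hyperedges of the enlarged hypergraph intersect.
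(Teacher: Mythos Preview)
Your proof is correct and follows essentially the same approach as the paper: project the Berge cycle to a cyclic sequence of vertices in $G$ via the blob map, collapse consecutive repeats to obtain a closed walk with distinct edges, use the bounds $s,t<2m$ to rule out the degenerate case where all core vertices lie in one blob, and extract a short cycle in $G$. Your write-up is more careful about the bookkeeping (the intersection fact, the run-length bound, and the ``even subgraph contains a cycle'' step), but the underlying argument is the same.
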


\begin{proof}
By contrapositive, let $g\in \{2m, 2m+1\}$ and assume that $H$ contains a Berge-$C_g$ with vertex set $v_1,...,v_{g}$ and edge set $e_1,...,e_{g}$ such that $v_i,v_{i+1} \in e_i$ (subscripts considered modulo $g$). Let $A$ and $B$ be the partite sets of graph $G$. For each $v_j$ let $w_j$ be the vertex in $G$ which was enlarged to create $v_j$. Note that the $w_j$ may not be distinct, but in the sequence $(w_1,\cdots, w_t)$ a vertex may appear at most $s$ times if it is in $A$ and at most $t$ times if it is in $B$. For each $j$, if $w_j$ and $w_{j+1}$ are distinct, then $w_j \sim w_{j+1}$ in $G$. Then, ignoring repeated vertices, the sequence $w_1, w_2, \dots, w_g, w_1$ corresponds to a closed walk in $G$ of length $\ell \leq g$. Furthermore, since $e_1,\cdots, e_g$ are distinct hyperedges, the edges in this closed walk must be distinct. Since $g \geq 2m$ and $s < 2m$ and $t<2m$, we have that $\ell \geq 3$ Therefore, there is a cycle in $G$ of length at least $3$ and at most $g$.

\end{proof}

We prove a similar lemma regarding enlarging graphs that are $K_{a,b}$-free.

\begin{lemma}\label{lem: biclique blowup}
Let $a,b\geq 2$ and $G$ be a bipartite graph with no $K_{a,b}$. Let $H$ be the $(s + t)$-uniform hypergraph obtained by enlarging each vertex in one part of $G$ to $s$ vertices and each vertex in the other part of $G$ to $t$ vertices. Then if $s<a+b$ and $t<a+b$, $H$ does not contain a Berge-$K_{a,b}$.
\end{lemma}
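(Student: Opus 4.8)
The plan is to prove the contrapositive, in the same spirit as Lemma~\ref{lem: cycle blowup}: assuming $H$ contains a Berge-$K_{a,b}$, I will produce a copy of $K_{a,b}$ in $G$. Write $A,B$ for the parts of $G$, with each vertex of $A$ enlarged to $s$ vertices and each vertex of $B$ enlarged to $t$ vertices, and for a vertex $z$ of $H$ let $w(z)$ denote the vertex of $G$ that was enlarged to create it. Fix a Berge-$K_{a,b}$ with (distinct) core vertices $x_1,\dots,x_a$ and $y_1,\dots,y_b$ and hyperedges $e_{ij}\ni x_i,y_j$. Two facts are immediate from the definition of enlarging: (i) each hyperedge of $H$ is the enlargement of a \emph{unique} edge of $G$, so if $f_{ij}\in E(G)$ denotes the edge with enlargement $e_{ij}$, then the $f_{ij}$ are pairwise distinct; and (ii) since $x_i$ and $y_j$ are copies of $w(x_i)$ and $w(y_j)$, both of these vertices lie in the $2$-element set $f_{ij}$, so in particular if $w(x_i)\neq w(y_j)$ then $f_{ij}=\{w(x_i),w(y_j)\}$ and hence $w(x_i)w(y_j)\in E(G)$.

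The first step is to show that $w$ is injective on $\{x_1,\dots,x_a\}$, and symmetrically on $\{y_1,\dots,y_b\}$. If $w(x_i)=w(x_{i'})=:u$ for some $i\neq i'$, then for every $j$ we cannot have $w(y_j)\neq u$, since that would give $f_{ij}=\{u,w(y_j)\}=f_{i'j}$, contradicting (i); so $w(y_j)=u$ for all $j$. Now using $b\geq 2$ and running the same argument with the pairs $(k,1)$ and $(k,2)$ forces $w(x_k)=u$ for every $k$ as well. Hence all $a+b$ (distinct) core vertices are copies of $u$, so $u$ has at least $a+b$ copies, contradicting $s,t<a+b$. The argument for $\{y_1,\dots,y_b\}$ is identical with the two sides of $G$ interchanged (and uses $a\geq 2$). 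Thus $\{w(x_i)\}_i$ and $\{w(y_j)\}_j$ have sizes $a$ and $b$.

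The second, and I expect most delicate, step is to show these two sets are disjoint; this is the only place bipartiteness of $G$ is needed. Suppose $w(x_1)=w(y_1)=:u$, say with $u\in A$. By injectivity, $w(x_i)\neq u$ for $i\geq 2$ and $w(y_j)\neq u$ for $j\geq 2$, so applying (ii) to the pairs $(i,1)$ and $(1,j)$ shows each such $w(x_i)$ and $w(y_j)$ is adjacent to $u$ in $G$, and therefore lies in $B$. But then for $i,j\geq 2$, if $w(x_i)\neq w(y_j)$, fact (ii) would place an edge of $G$ inside $B$, contradicting bipartiteness; so $w(x_i)=w(y_j)$ for all $i,j\geq 2$, and in particular $w(x_2)=w(y_2)$. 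Now $f_{12}=\{w(x_1),w(y_2)\}=\{u,w(x_2)\}$ and $f_{21}=\{w(x_2),w(y_1)\}=\{w(x_2),u\}$ are the same edge of $G$, so $e_{12}=e_{21}$, contradicting (i). (This $e_{12}=e_{21}$ step is what keeps the argument uniform in $a$ and $b$, covering in particular the small case $a=b=2$, where the collapsing arguments above do not on their own produce $a+b$ copies of one vertex.)

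Finally, since $\{w(x_i)\}_i$ and $\{w(y_j)\}_j$ are disjoint of sizes $a$ and $b$, fact (ii) gives $w(x_i)w(y_j)\in E(G)$ for all $i,j$, so $G$ contains a copy of $K_{a,b}$ on these $a+b$ vertices, contradicting that $G$ is $K_{a,b}$-free and completing the contrapositive. The recurring technical point — the thing to be careful about throughout — is that distinct edges of the Berge-$K_{a,b}$ correspond to distinct hyperedges of $H$ and hence to distinct edges of $G$; this is what powers every collision argument above.
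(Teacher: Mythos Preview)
Your proof is correct and follows essentially the same approach as the paper: prove the contrapositive by pulling the Berge-$K_{a,b}$ back to $G$ via the map $w$, using throughout that distinct hyperedges of $H$ arise from distinct edges of $G$. Your organization (first establishing injectivity on each side, then disjointness) differs slightly from the paper's (anchoring one adjacent pair and then showing the two sides of the core land in the two parts of $G$), but the driving mechanism is identical and your write-up is in fact more careful in handling the small cases.
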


\begin{proof}
By contrapositive, assume $H$ contains a Berge-$K_{a,b}$ with vertex set $v_1,\cdots, v_a, u_1,\cdots, u_b$ and edge set $\{e_{i,j}\}$ where $\{v_i, u_j\} \subset e_{i,j}$. Let the partite sets of $G$ be $A'$ and $B'$ and let $A$ be the set of vertices that came from enlarging vertices in $A'$ and $B$ be the set of vertices that came from enlarging vertices in $B'$.  For each $u_i$ and $v_j$, let $u'_i$ and $v'_j$ be the vertex in $G$ that was enlarged to create $u_i$ or $v_j$ respectively. First the set $\{v'_1,\cdots, v'_a, u'_1,\cdots, u'_b\}$ contains more than one vertex because each vertex of $G$ was enlarged to either $s$ or $t$ vertices in $H$ and both $s$ and $t$ are at most $a+b-1$ by assumption. Therefore, there exist $u'_i$ and $v'_j$ that are adjacent in $G$, and we may assume without loss of generality that $v'_j \in A'$ and $u'_i \in B'$ (and therefore $v_j\in A$ and $u_j \in B$). 

Next we will show that $v_k$ and $u_k$ are in $A$ and $B$ respectively for all $k$. The only vertices in $A$ that $v_i$ shares edges with are those that came from enlarging $v'_i$. Therefore, if $u_k \in A$ for some $k$ we must have that $u'_k = v'_i$. But then this forces all vertices all vertices in $A$ to come from enlarging either $v'_i$ or $u'_j$. For $a>1$ this is a contradiction for then the map from the edges of the Berge-$K_{a,b}$ to the edges of $K_{a,b}$ will not be a bijection. A similar contradiction occurs if $v_k\in B$ for some $k$.

Since all $v_i$ are in $A$ and $u_j$ are in $B$, we must also have that all $v'_i$ and all $v'_j$ are distinct, otherwise again, since $a,b\geq 2$, the map  from the edges of the Berge-$K_{a,b}$ to the edges of $K_{a,b}$ will not be a bijection. But now if all $v'_i$ and $u'_j$ are distinct, we have that $v'_i$ and $u'_j$ are adjacent in $G$ for all $i$ and $j$, ie there is a $K_{a,b}$ in $G$.

\end{proof}

We will also use the following general theorem which allows one to obtain a coloring of $K_{n,\cdots, n}^{(r)}$ given a coloring of $K_{n,n}$.

\begin{theorem}\label{thm: hypergraph partitioning}
Let $G_1,\cdots, G_T$ be bipartite graphs on partite sets $A$ and $B$ whose union is $K_{n,n}$. For each $j$ let $H_j$ be the hypergraph obtained by enlarging each vertex in $A$ to $s$ vertices and each vertex in $B$ to $t$ vertices. Assume that $\mathcal{F}$ is a family of hypergraphs such that $H_i$ is $\mathcal{F}$-free for all $i$. Then there is a partition of the edge set of the complete $(s+t)$-partite $(s+t)$-uniform hypergraph with $n$ vertices in each part into $T \cdot n^{s + t - 2}$ subgraphs each of which is $\mathcal{F}$-free.
\end{theorem}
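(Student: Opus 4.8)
The plan is to index the edges of $K_{n,\cdots,n}^{(s+t)}$ by the data of which $G_j$ a certain projected edge lies in, together with a bounded amount of extra combinatorial data recording how the $s+t$ vertices of a hyperedge sit inside the $s+t$ parts. Label the parts of the complete $(s+t)$-partite hypergraph as $X_1,\cdots,X_s$ (playing the role of the $s$ copies coming from $A$-vertices) and $Y_1,\cdots,Y_t$ (playing the role of the $t$ copies coming from $B$-vertices), each identified with $[n]$. A hyperedge is a choice of one vertex from each part, i.e.\ a pair $((p_1,\cdots,p_s),(q_1,\cdots,q_t)) \in [n]^s \times [n]^t$. First I would use a single coordinate from the $X$-side and a single coordinate from the $Y$-side, say $p_1$ and $q_1$, to locate the edge $\{p_1,q_1\}$ of $K_{n,n}$ in one of the $G_j$; this accounts for the factor $T$. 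The remaining $s-1$ coordinates $p_2,\cdots,p_s$ and $t-1$ coordinates $q_2,\cdots,q_t$ give a further index in $[n]^{s+t-2}$, contributing the factor $n^{s+t-2}$. Thus I partition the edge set into $T\cdot n^{s+t-2}$ classes, one for each pair (choice of $j\in[T]$, choice of $(p_2,\cdots,p_s,q_2,\cdots,q_t)\in[n]^{s+t-2}$).

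The key step is then to check that each color class is $\mathcal{F}$-free, and here is where I would be careful. Fix $j$ and fix $(p_2,\cdots,p_s,q_2,\cdots,q_t)$. The hyperedges in this class are exactly those $((p_1,\cdots,p_s),(q_1,\cdots,q_t))$ with the last $s-1$ and last $t-1$ coordinates as prescribed and with $p_1q_1 \in E(G_j)$. I claim this subhypergraph is isomorphic to a subhypergraph of $H_j$ (the enlargement of $G_j$). Indeed, map the vertex $p_1 \in X_1$ in this class to the first copy of the $A$-vertex $p_1$ of $G_j$, map $q_1\in Y_1$ to the first copy of the $B$-vertex $q_1$, and map each of the frozen vertices $p_i \in X_i$ (for $i\geq 2$) and $q_i\in Y_i$ to a fixed dummy copy; since each hyperedge uses $v_1,\cdots,v_s$ for a single enlarged $A$-vertex and $u_1,\cdots,u_t$ for a single enlarged $B$-vertex, and here only one coordinate varies on each side, every hyperedge of the class maps to a hyperedge of $H_j$, and distinct hyperedges of the class map to distinct hyperedges of $H_j$ (they have distinct $(p_1,q_1)$ pairs, hence come from distinct edges of $G_j$). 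A subhypergraph of an $\mathcal{F}$-free hypergraph is $\mathcal{F}$-free, so each class is $\mathcal{F}$-free as required.

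The main obstacle I anticipate is making the "projection/embedding" argument airtight, in particular verifying that every hyperedge of $K_{n,\cdots,n}^{(s+t)}$ lands in exactly one class (so that we genuinely have a partition, not merely a cover) and that the map into $H_j$ is injective on each class. The partition property is automatic from the construction — the class is determined by reading off the coordinates $p_2,\cdots,p_s,q_2,\cdots,q_t$ and then the index $j$ with $p_1q_1\in E(G_j)$ is unique because the $G_j$ partition $K_{n,n}$ — but one must phrase it so that the dependence on the chosen "special" coordinates $p_1,q_1$ versus the frozen ones is consistent. Once that bookkeeping is fixed, the count $T\cdot n^{s+t-2}$ and the $\mathcal{F}$-freeness both fall out, and the proof is complete.
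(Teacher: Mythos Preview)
Your partition into $T\cdot n^{s+t-2}$ classes is the right count, but the classes you define are \emph{not} subhypergraphs of $H_j$, so the $\mathcal{F}$-freeness step fails. In your class indexed by $(j;p_2,\dots,p_s,q_2,\dots,q_t)$, every hyperedge contains the fixed $s+t-2$ vertices $p_2,\dots,p_s,q_2,\dots,q_t$, while $p_1$ and $q_1$ vary. But in $H_j$ two distinct edges can only meet in $0$, $s$, or $t$ vertices (sharing a single copy $v_k$ forces sharing all of $v_1,\dots,v_s$). In particular, if $G_j$ has any vertex of degree at least $2$, your class contains two edges meeting in exactly $s+t-1$ vertices, which never occurs in $H_j$; so your proposed embedding cannot send edges to edges. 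Concretely, with $s=t=2$ and $\mathcal{F}=\{F\}$ where $F$ is two $4$-edges sharing three vertices, every $H_j$ is $F$-free, yet your class contains $F$ whenever $G_j$ is not a matching. Hence the only hypothesis available---that $H_j$ is $\mathcal{F}$-free---does not transfer to your classes.

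The missing idea, and what the paper does, is to record \emph{differences} rather than absolute coordinates. Identify each part with $\mathbb{Z}/n\mathbb{Z}$ and index a class by $(j;a_2,\dots,a_s,b_2,\dots,b_t)$; its edges are $(u,u+a_2,\dots,u+a_s,\,v,v+b_2,\dots,v+b_t)$ for $uv\in E(G_j)$. Now all $s$ coordinates on the $A$-side move together with $u$, so the map $u+a_k\mapsto u_k$, $v+b_\ell\mapsto v_\ell$ is a genuine isomorphism onto $H_j$, and $\mathcal{F}$-freeness follows. The covering check is the same as yours: given $(v_1,\dots,v_{s+t})$, set $a_k=v_k-v_1$, $b_\ell=v_{s+\ell}-v_{s+1}$, and take the $j$ with $v_1v_{s+1}\in E(G_j)$. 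Your bookkeeping instinct was right; what was missing is the additive shift that makes each color class look like the enlargement rather than like a sunflower through a common core.
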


\begin{proof}
Let $A$ and $B$ be identified with $\mathbb{Z}/ n\mathbb{Z}$, and let $A_1,\cdots, A_s$ and $B_1,\cdots, B_t$ be disjoint sets of vertices also each identified with $\mathbb{Z} /n\mathbb{Z}$. For $a_2, \cdots, a_s$ and $b_2, \cdots, b_t$ arbitrary elements of $\mathbb{Z} / n\mathbb{Z}$ and $1\leq j\leq T$, define the $(s+t)$-partite $(s+t)$-uniform hypergraph $H_i (a_2, \cdots, a_s, b_2, \cdots, b_t)$ to be the $(s+t)$-partite $(s+t)$-uniform hypergraph on partite sets $A_1,\cdots, A_s, B_1,\cdots, B_t$ with edge set 
\[
\{(u, u+a_2, \cdots, u+a_s, v, v+b_2, \cdots v+b_t): uv\in E(G_i) \},
\]
where vertices in coordinates $1$ through $s$ are in parts $A_1,\cdots, A_s$ respectively and vertices in coordinates $s+1,\cdots s+t$ are in parts $B_1,\cdots, B_t$ respectively.

Note that $H_i(0, \cdots, 0, 0,\cdots, 0)$ is isomorphic to the hypergraph obtained by enlarging each vertex in $G_i$ to $s$ vertices if it is in $A$ and to $t$ vertices if it is in $B$, and hence it is $\mathcal{F}$-free. Furthermore for any choice $a_2,\cdots, a_s, b_2, \cdots b_t$, the hypergraph $H_i(a_2,\cdots, a_s, b_2, \cdots b_t)$ is isomorphic to $H_i(0,\cdots, 0, 0, \cdots, 0)$ via the explicit automorphism
\[ u\mapsto
\begin{cases}
u & u\in A_1 \cup B_1\\
u+a_i & u\in A_i, i\geq 2\\
u+b_i & u\in B_i, i\geq 2
\end{cases}
\]
Note that as $i$ ranges from $1$ to $T$ and $a_2,\cdots, a_s, b_2, \cdots b_t$ vary over all choices in $\mathbb{Z} / n\mathbb{Z}$, we have $T\cdot n^{s+2-2}$ hypergraphs $H_i(a_2,\cdots, a_s, b_2, \cdots b_t)$. It only remains to show that 
\[
\bigcup_{j=1}^T \bigcup_{a_2,\cdots, a_s, b_2, \cdots b_t} H_i(a_2,\cdots, a_s, b_2, \cdots b_t)
\]
covers all of the hyperedges of the complete $r$-partite $r$-uniform hypergraph on partite sets $A_1, \cdots, A_s, B_1,\cdots, B_t$. To do this, consider an arbitrary hyperedge $(v_1,\cdots, v_{s+t})$. Let $i$ be the index such that $v_1 v_{s+1} \in E(G_i)$ (this is well-defined since the union of $G_1,\cdots, G_T$ is $K_{n,n}$). Then by the definitions we have that $(v_1,\cdots, v_{s+t})$ is an edge of the hypergraph
\[
H_i(v_2-v_1, v_3-v_1, \cdots, v_s - v_1, v_{s+2} - v_{s+1}, \cdots, v_{s+t} - v_{s+1}).
\]
\end{proof}

\begin{proof}[Proof of Theorem \ref{thm: cycles ramsey number}]
It is known \cite{GL} that the Tur\'an numbers for Berge cycles satisfy
\begin{align*}
\mathrm{ex}_r(n, \mathcal{B}(C_{2m})) &= O\left( n^{1+\frac{1}{m}}\right)\\
\mathrm{ex}_r(n, \mathcal{B}(C_{2m+1})) &= O\left( n^{1+\frac{1}{m}}\right)
\end{align*}

Applying this result and the pigeonhole principle yields the upper bound. For the lower bound, showing that $R_r(\mathcal{B}(C_{2m}); k)$ and $R_r(\mathcal{B}(C_{2m+1}); k)$ are $\Omega\left(k^{\frac{m}{rm-m-1}}\right)$ is equivalent to showing that $K_n^{(r)}$ can be partitioned into $O\left(n^{r - 1 - \frac{1}{m}} \right)$ subgraphs each of which are $\mathcal{B}(C_{2m})$ and $\mathcal{B}(C_{2m+1})$ free respectively. Let $s$ and $t$ be defined so that $s+t = r$ and $s,t < 2m$.

It is known that for $m\in \{2,3,5\}$, $K_{n,n}$ can be partitioned into $O\left(n^{1-\frac{1}{m}}\right)$ subgraphs each of which has girth at least $2m+2$ (see Lemma 5 of \cite{LL} for the case $m=2$ and Proposition 3.1 of \cite{T} for the cases when $m=3$ and $m=5$). Therefore, for $T = O\left(n^{1 - \frac{1}{m}}\right)$ assume that $G_1, \cdots, G_T$ are graphs each of which have girth at least $2m+2$ and whose union is $K_{n,n}$. By Lemma \ref{lem: cycle blowup}, for each $G_i$ the hypergraph obtained by enlarging each vertex in one partite set to $s$ vertices and each vertex in the other partite set to $t$ vertices is both $\mathcal{B}(C_{2m})$-free and $\mathcal{B}(C_{2m+1})$-free. Then, by applying Theorem \ref{thm: hypergraph partitioning}, we have a set of $O\left(n^{r-1-\frac{1}{m}}\right)$ subgraphs which are $\mathcal{B}(C_{2m})$ and $\mathcal{B}(C_{2m+1})$-free the union of which cover the edges of $K_{n,\cdots, n}^{(r)}$. Applying Theorem \ref{thm: uniform to complete}, we may partition the edge set of $K_n^{(r)}$ into $O\left(n^{r-1-\frac{1}{m}}\right)$ subgraphs each of which are $\mathcal{B}(C_{2m})$ and $\mathcal{B}(C_{2m+1})$-free. This completes the proof.
\end{proof}

\begin{proof}[Proof of Theorem \ref{thm: biclique ramsey number}]
The lower bound is similar to the proof of the lower bound in Theorem \ref{thm: cycles ramsey number}. We leave the details to the reader and only note that one uses Lemma \ref{lem: biclique blowup} and the result from \cite{ARS} that for $a>(b-1)!$, the edge set of $K_n$ may be partitioned into $\Theta(n^{1/b})$ subgraphs each of which is $K_{a,b}$-free.

For the upper bound, when $b=2$ we use the result from \cite{GMP} that 
\[
\mathrm{ex}_r(n, \mathcal{B}(K_{2,t})) = O\left(n^{3/2} \right),
\]
for all $r$ and $t$ and the result from \cite{GP} that 
\[
\mathrm{ex}_r(n, \mathcal{B}(K_{a,b})) = O(n^{2-1/s})
\]
whenever $r\geq a+b$. The bound then follows from the pigeonhole principle.

\end{proof}

\section{Conclusion}
In this paper we determined the order of magnitude for the multicolor Ramsey numbers of Berge cycles of length $4$, $5$, $6$, $7$, $10$, or $11$, as long as the uniformity is small enough. Extending our theorem to other cycle lengths or uniformities is out of reach at the current time, for in these cases we do not even know the order of magnitude of the Tur\'an number $\mathrm{ex}_r(n, C_\ell)$. Our main result follows from a more general set up that allows one to go from a construction in the graph case to a construction in the hypergraph case. Because of this we were also able to give the order of magnitude for $R_r(\mathcal{B}(K_{a,b}))$ for some choices of $r,a,b$. The lower bound in Theorem \ref{thm: biclique ramsey number} is not tight in general. It is known (see \cite{GMP}) that 
\[
\mathrm{ex}(n, K_r, F) \leq \mathrm{ex}_r(n, \mathcal{B}(F)) \leq \mathrm{ex}(n, K_r, F) + \mathrm{ex}(n, F),
\]
where $\mathrm{ex}(n, K_r, F)$ denotes the maximum number of copies of $K_r$ in an $n$-vertex $F$-free graph. Combining this with results from \cite{AS} gives that for $a>(b-1)!$ and $3\leq r\leq \tfrac{a}{2} + 1$,
\[
\mathrm{ex}_r(n, \mathcal{B}(K_{a,b})) = \Theta \left( n^{r - \binom{r}{2}/a}\right)
\]

The upper bound that one gets from the pigeonhole principle for such $r,a,b$ does not match our lower bound in Theorem \ref{thm: biclique ramsey number}. Perhaps one could leverage the projective norm graphs to improve on our result in these cases. When $\tfrac{a}{2} + 1 < r < a+b$ the order of magnitude for $\mathrm{ex}_r(n, \mathcal{B}(K_{a,b}))$ is not known and this would have to be determined before answering the Ramsey question. It would be interesting to determine the order of magnitude for the multicolor Ramsey number of $\mathcal{B}(G)$ for other graphs $G$.

Throughout this paper we did not try to optimize our multiplicative constants because doing so would not have given us an asymptotic formula in any of the cases. We note that in all of the constructions as they are written, there are pairs of color classes that correspond to edge disjoint hypergraphs, and these could be combined to reduce the total number of colors used. It is not clear what the best way to do this systematically is, but for example, we can obtain a lower bound for $R_3(n, \mathcal{B}(C_4))$ of \[\left(\frac{(3\sqrt{2}-4)(3\sqrt{3}-1)}{2} -o(1)\right)^{2/3}k^{2/3} \approx 0.63756 k^{2/3}.\] Furthermore, in some cases it is possible to extend Theorem \ref{thm: uniform to complete} to some $\beta \leq r-2$. Determining an asymptotic formula for any of the Ramsey numbers studied in this paper would be very interesting but would require new ideas, as even asymptotics for the corresponding Tur\'an numbers are not known (cf \cite{EGM, EGMST, GL} and Section 5 of \cite{GMP}). In the specific case of $3$-uniform graphs of girth $5$, it is known \cite{LV} that 
\[
\mathrm{ex}_3(n, \{\mathcal{B}(C_2), \mathcal{B}(C_3), \mathcal{B}(C_4)\}) \sim \frac{1}{6} n^{3/2}.
\]
One construction showing the lower bound is to take the vertex set to be the $1$-dimensional subspaces of $\mathbb{F}_q^3$ where $3$ subspaces form an edge if and only if they are an orthogonal basis. It would be interesting to try to use automorphisms of this graph to show (if it is true) that \[R_3(\{\mathcal{B}(C_2), \mathcal{B}(C_3), \mathcal{B}(C_4)\}; k) \sim k^{2/3}.\]

\bibliographystyle{plain}
\bibliography{bib.bib}

\end{document}